\documentclass[12pt, a4paper]{amsart}
\usepackage{amsfonts}
\usepackage{amscd,amsmath}
\usepackage{amssymb}
\usepackage{setspace}
\newenvironment{rcases}
  {\left.\begin{aligned}}
  {\end{aligned}\right\rbrace}

\textwidth 400 pt
\newtheorem{theorem}{Theorem}[section]

\newtheorem{proposition}[theorem]{Proposition}
\newtheorem{definition}[theorem]{Definition}

\theoremstyle{remark}

\newtheorem*{rem*}{Remark}

\newcommand{\C}{\mathbb{C}}

\newcommand{\T}{\mathbb{S}}

\newcommand{\comment}[1]{}
\pagestyle{plain} \pdfpagewidth 9in \pdfpageheight 12in

\numberwithin{equation}{section}
\begin{document}

\title{A construction of bent functions on a finite group}
\author{Mani Shankar Pandey$^1$, Sumit Kumar Upadhyay$^2$ AND Vipul Kakkar$^3$\vspace{.4cm}
}
\address{$^{1,2}$Department of Applied Sciences,\\ Indian Institute of Information Technology, Allahabad }
\address{$^{3}$Department of Mathematics\\Central University of Rajasthan\\
Kishangarh, Rajasthan}
\thanks {2010 Mathematics Subject classification : 05B10, 05C25, 20C15}
\email{\tiny{$^1$manishankarpandey4@gmail.com, $^2$upadhyaysumit365@gmail.com, $^3$vplkakkar@gmail.com}}.\\
%\thanks{ $^*$This work is supported by.}

\begin{abstract}
In this paper, we discuss when a class function on a finite group is a bent function. We have found a necessary condition for a class function on a finite abelian group to be bent. Also, we have found a necessary and sufficient condition for a class function on a finite cyclic group to be bent. 
\end{abstract}
\maketitle
\textbf{Keywords}:  Bent functions, finite group, class functions, characters.

\section{Introduction}
Bent functions was introduced by O. Rothuaus in \cite{or}. Bent functions are maximum distant from the affine functions. Bent functions have many applications in various fields, viz. cryptography and coding theory and combinatorial design. The maximum-length sequences based on bent functions have applications in a code division multiple access (CDMA) environment. T. Wada in \cite{tw}, studied the relation between bent functions and codes for CDMA. The concept of bent functions on a finite abelian group was introduced by , O. A. Logachev et. al in \cite{ol}. V. I. Solodovnikov in \cite{vs}, generalized the concept of bent functions from an abelian group to another abelian group. L. Poinsot in \cite{LP}, introduced the concept of a bent function on a finite nonabelian group as a generalization of those of a finite abelian group.
The following are basic definition and result from \cite{LP}.
\begin{definition}
 Let $G$ be a finite group and $f : G \longrightarrow \mathbb{C}$. The derivative of $f$ in the direction $\sigma \in G$ is defined as $\frac{df}{d \sigma} : G \longrightarrow \mathbb{C}$, $x \longrightarrow \overline{f(x)}f(\sigma x)$.
\end{definition}
\begin{theorem}\label{t}
Let $G$ be a finite group and $f : G\rightarrow$ $\T^1$. The map $f$ is bent if and only if $\displaystyle{\sum_{x \in G} \frac{df}{d \sigma}(x)}$=0, $\forall \sigma \in G^\ast = G \setminus \{e\}$.
\end{theorem}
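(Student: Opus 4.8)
The plan is to pass to the complex group algebra $\C[G]$, equipped with the conjugate‑linear anti‑involution determined by $x^{*}=x^{-1}$ for $x\in G$, and to attach to $f$ the element $\widehat f=\sum_{x\in G}f(x)\,x\in\C[G]$. Unwinding the Fourier transform on $G$, the definition of a bent function from \cite{LP} says that $f$ is bent precisely when $\rho(\widehat f)\,\rho(\widehat f)^{*}=|G|\,I_{d_\rho}$ for every irreducible representation $\rho$ of $G$ of degree $d_\rho$; equivalently, $\tfrac{1}{\sqrt{|G|}}\rho(\widehat f)$ is unitary for every such $\rho$. Since the Fourier transform $\C[G]\to\prod_{\rho}M_{d_\rho}(\C)$ is a $*$‑isomorphism of semisimple algebras (Wedderburn), this block‑by‑block condition is equivalent to the single identity $\widehat f\,\widehat f^{*}=|G|\cdot e$ in $\C[G]$, where $e$ denotes the identity of $G$ regarded as an element of $\C[G]$.

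The next step is to compute $\widehat f\,\widehat f^{*}$ and express its coefficients through the derivatives of $f$. Expanding the product and collecting, for a fixed $\sigma\in G$, the terms with $xy^{-1}=\sigma$ (i.e. $x=\sigma y$), one gets
\[
\widehat f\,\widehat f^{*}=\sum_{x,y\in G}f(x)\overline{f(y)}\,xy^{-1}
=\sum_{\sigma\in G}\Bigl(\sum_{y\in G}\overline{f(y)}\,f(\sigma y)\Bigr)\sigma
=\sum_{\sigma\in G}\Bigl(\sum_{y\in G}\frac{df}{d\sigma}(y)\Bigr)\sigma .
\]
Hence $\widehat f\,\widehat f^{*}=|G|\cdot e$ is equivalent to the two requirements that the coefficient of $e$ equals $|G|$ and that the coefficient of every $\sigma\neq e$ vanishes. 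The first requirement is automatic: the coefficient of $e$ is $\sum_{y\in G}\overline{f(y)}f(y)=\sum_{y\in G}|f(y)|^{2}=|G|$ because $f$ takes values in $\T^1$. Therefore $f$ is bent if and only if $\sum_{y\in G}\frac{df}{d\sigma}(y)=0$ for all $\sigma\in G^{\ast}$, which is the assertion.

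The only genuinely delicate point is the first paragraph, namely recording the exact form of the bentness definition of \cite{LP} and justifying that "flatness of the Fourier transform, block by block" coincides with the clean algebra identity $\widehat f\,\widehat f^{*}=|G|\cdot e$. This rests on two standard facts: for a square complex matrix $M$ one has $MM^{*}=cI$ if and only if $M^{*}M=cI$ (so the left/right placement of the involution, equivalently the use of left translates $\sigma x$ rather than right translates in the derivative, is immaterial), and a $*$‑algebra isomorphism transports the equation $aa^{*}=c\cdot 1$ faithfully onto each Wedderburn component. Once this identification is in place, everything else is the bookkeeping in $\C[G]$ carried out above, together with the observation $|f|\equiv 1$.
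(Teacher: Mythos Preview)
The paper does not supply its own proof of this theorem; it is quoted in the introduction as a result from \cite{LP}, without argument. There is therefore nothing in the present paper to compare your approach against.

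That said, your proof is correct and is essentially the standard route to this characterisation. You identify bentness with the single identity $\widehat f\,\widehat f^{*}=|G|\cdot e$ in $\C[G]$ via the Wedderburn $*$-isomorphism, and then read off the coefficient of each $\sigma$ in $\widehat f\,\widehat f^{*}$ as the autocorrelation $\sum_{y\in G}\overline{f(y)}f(\sigma y)=\sum_{y\in G}\frac{df}{d\sigma}(y)$; the coefficient at $e$ is automatically $|G|$ because $|f|\equiv 1$. The one point you explicitly (and rightly) take as input is Poinsot's definition of bentness in the nonabelian setting, namely that $\tfrac{1}{\sqrt{|G|}}\rho(\widehat f)$ is unitary for each irreducible representation $\rho$; once that is granted, the rest of your argument is self-contained and watertight. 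This is, in outline, the argument given in \cite{LP} itself.
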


\section{Main Results}

Let $G$ be a finite group of order $n$. A class function is a function on a group $G$ that is constant on the conjugacy classes of $G$. Let $\chi_1,\chi_2,....\chi_r$, where $1\leq r \leq n$ be the all irreducible distinct characters of $G$. Since $\chi_1,\chi_2,....\chi_r$ form an orthonormal basis of the set of all class functions on $G$, every class functions from $G \rightarrow$ $\T^1$ can be written as linear combination of $\chi_1,\chi_2,....\chi_r$. In this paper we are interested when a class function from $G \rightarrow$ $\T^1$ is a bent function, where $\T^1=\{z \in \C\mid |z|=1\}$.
\begin{theorem}
Let $G$ be a finite abelian group of order $n$ and $\chi_1,\chi_2,....\chi_n$ be its distinct one dimensional characters. If a class function $f :G \rightarrow$ $\T^1$ written as $f=a_1 \chi_1+a_2 \chi_2+...+a_n \chi_n$ is bent, then $\vert a_i \vert ^2 = \frac{1}{n}$, $\forall i=1,2....n$, where $a_i$ are complex numbers.
\end{theorem}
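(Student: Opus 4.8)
The plan is to apply Theorem~\ref{t} and to use crucially that, since $G$ is abelian, each $\chi_i$ is a one-dimensional character, hence a group homomorphism $G\to\T^1$ satisfying $\chi_i(\sigma x)=\chi_i(\sigma)\chi_i(x)$. First I would substitute $f=\sum_{i=1}^n a_i\chi_i$ into $\frac{df}{d\sigma}(x)=\overline{f(x)}f(\sigma x)$ and sum over $x\in G$. Expanding the product,
\[
\sum_{x\in G}\frac{df}{d\sigma}(x)=\sum_{i=1}^n\sum_{j=1}^n \overline{a_j}\,a_i\,\chi_i(\sigma)\sum_{x\in G}\chi_i(x)\overline{\chi_j(x)},
\]
and the innermost sum equals $n\,\delta_{ij}$ by the orthogonality relations for irreducible characters. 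Hence $\sum_{x\in G}\frac{df}{d\sigma}(x)=n\sum_{i=1}^n|a_i|^2\chi_i(\sigma)$ for every $\sigma\in G$.

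Next I would read off two facts from this identity. Taking $\sigma=e$ and using that $f$ is $\T^1$-valued, so that $\sum_{x\in G}\overline{f(x)}f(x)=n$ while $\chi_i(e)=1$, gives $\sum_{i=1}^n|a_i|^2=1$. On the other hand, since $f$ is bent, Theorem~\ref{t} forces $\sum_{x\in G}\frac{df}{d\sigma}(x)=0$, hence $\sum_{i=1}^n|a_i|^2\chi_i(\sigma)=0$, for every $\sigma\in G^\ast$. Therefore the class function $g:=\sum_{i=1}^n|a_i|^2\chi_i$ satisfies $g(e)=1$ and $g(\sigma)=0$ for all $\sigma\neq e$; that is, $g$ is the indicator function of $\{e\}$.

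Finally I would recover the coefficients. Because $\chi_1,\dots,\chi_n$ are orthonormal for $\langle\phi,\psi\rangle=\frac1n\sum_{x\in G}\phi(x)\overline{\psi(x)}$, the $\chi_j$-coefficient of $g$ is $\langle g,\chi_j\rangle=\frac1n\sum_{x\in G}g(x)\overline{\chi_j(x)}=\frac1n\,\overline{\chi_j(e)}=\frac1n$. Since the characters are linearly independent, comparing this with $g=\sum_{i=1}^n|a_i|^2\chi_i$ yields $|a_j|^2=\frac1n$ for all $j$, as desired. (Equivalently, one may invoke the column orthogonality relation $\sum_{i=1}^n\chi_i(\sigma)=n\,\delta_{\sigma,e}$ to write the indicator of $\{e\}$ as $\frac1n\sum_i\chi_i$ and match coefficients.)

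The argument is short and I do not anticipate a genuine obstacle; the one point that needs care is to remember to also use the normalization coming from $f(G)\subseteq\T^1$ (equivalently the $\sigma=e$ instance of the computation above). Without it, Theorem~\ref{t} only constrains $g$ on $G^\ast$ and leaves the common value of the $|a_i|^2$ undetermined, so the constant $\frac1n$ would not appear.
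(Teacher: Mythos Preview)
Your proof is correct and follows essentially the same route as the paper: both expand $\sum_{x}\overline{f(x)}f(\sigma x)$, use that the $\chi_i$ are homomorphisms together with orthogonality to reduce to $\sum_i|a_i|^2\chi_i(\sigma)=0$ for $\sigma\neq e$ and $\sum_i|a_i|^2=1$ from $|f|\equiv 1$, and then extract the $|a_i|^2$. The only cosmetic difference is the last step: the paper packages the equations as $\Phi\mathbf{w}=\mathbf{y}$ and inverts via $\Phi^{-1}=\tfrac1n\overline{\Phi}^{\,T}$, while you take inner products with each $\chi_j$ (equivalently, apply column orthogonality), which is the same computation.
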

\begin{proof}
Since $f$ is bent, by Theorem \ref{t}
\begin{center}
$\displaystyle{\sum_{x \in G} \frac{df}{d \sigma}(x)}$=0 $\forall \sigma \in G^\ast~ =~ G \setminus \{e\}$.
\end{center}
 Now
$\displaystyle{\sum_{x \in G} \frac{df}{d \sigma}(x)}=  \displaystyle{\sum_{x \in G} \overline{f(x)}f(\sigma x)} \\= \displaystyle{\sum_{x \in G}} (\overline {a_1\chi_1(x)+a_2 \chi_2(x)+.....a_n \chi_n(x)})((a_1\chi_1(\sigma x)+a_2\chi_2(\sigma x)+.....a_n\chi_n(\sigma x))$ = $\displaystyle{\sum_{x \in G}}  [\vert a_1 \vert^2 \overline{\chi_1(x)} \chi_1(x\sigma)+ \cdots+\vert a_n \vert^2 \overline{\chi_n(x)} \chi_n(x\sigma)]$+ $ \displaystyle{\sum_{x \in G}} [\overline{a_1}a_2 \overline\chi_1(x) \chi_2(\sigma x)+\cdots +\overline{a_1}a_n \overline\chi_1(x) \chi_n(\sigma x)]+\cdots +\sum_{x \in G} [\overline{a_n}a_1 \overline{\chi_n(x)}\chi_1(\sigma x)+\cdots+ \overline {a_n} a_{n-1} \overline\chi_n(x) \chi_{n-1}(\sigma x)]=0$ $\forall \sigma \in G^*$.

Since $\chi_i$ are a group homomorphism, by using orthogonality relations in above equation we have
\begin{equation}\label{e1}
\sum_{i=1}^n \vert a_i \vert^2 \chi_i(\sigma)=0, ~ \forall \sigma \in G^\ast
\end{equation}
Also since $f$ is defined from $G$ to $\T^1$, $\vert f(x) \vert^2=1$ $\forall x \in G$. Therefore $\displaystyle{\sum_{x \in G}} f(x) \overline{f(x)}= \vert G \vert$. This implies that
\begin{center} 

$\displaystyle{\sum_{x \in G}} (\overline{a_1\chi_1(x)+a_2\chi_2(x)+...a_n\chi_n(x)}) (a_1 \chi_1(x)+a_2 \chi_2(x)+...a_n\chi_n(x))= \vert G \vert$
\end{center}
Therefore again by using orthogonality relations we have
\begin{center}
$\sum_{i=1}^n \vert a_i \vert ^2=1$
\end{center}
Since $\chi_i(e)=1$ $\forall i=1,\cdots, n$, we can write above equation as
\begin{equation}\label{e2}
\sum_{i=1}^n\vert a_i \vert^2 \chi_i(e)=1, \text{where $e$ is the identity of $G$}.
\end{equation}  

Let $G =\{e, g_2, \cdots, g_n\}$. Then we have a non-homogeneous system of linear equations $\mathbf{\Phi} \mathbf{w}=\mathbf{y}$, where
\begin{equation}\label{e3}
\underbrace{\begin{bmatrix}
\chi_1(e)       &   \chi_2(e)       & \dots     &   \chi_n(e)       \\
\chi_1(g_2)       &   \chi_2(g_2)       & \dots     &   \chi_n(g_2)       \\
\vdots  &  \vdots   &   \vdots  &   \vdots  \\   
\chi_1(g_n)       &   \chi_2(g_n)       & \dots     &   \chi_n(g_n)       \\
            \end{bmatrix}
            }_{\mathbf{\Phi}}
\underbrace{\begin{bmatrix}
\vert a_1 \vert^2     \\
\vert a_2 \vert^2     \\
\vdots  \\
\vert a_n \vert^2   \\
            \end{bmatrix}
            }_{\mathbf{w}}
    =
\underbrace{\begin{bmatrix}
1     \\
0     \\
\vdots  \\
0     \\
            \end{bmatrix}
            }_{\mathbf{y}}
\end{equation}
Since columns of the matrix $\mathbf{\Phi}$ are orthogonal to each other(by orthogonality relations), the matrix $\mathbf{\Phi}$ is non-singular with inverse $\frac{1}{n}{\bar{\mathbf{\Phi}}}^T$, where $T$ denotes transpose. This implies that the above system of equations has a unique solution.

Therefore 
\begin{center}
$\mathbf{w} = \mathbf{\Phi}^{-1} \mathbf{y} $  \[\Rightarrow\underbrace{\begin{bmatrix}
\vert a_1 \vert^2    \\
\vert a_2 \vert^2    \\
\vdots  \\
\vert a_n \vert^2     \\
            \end{bmatrix}
            }_{\mathbf{w}} = \frac{1}{n}
{\bar{\mathbf{\Phi}}}^T
\underbrace{\begin{bmatrix}
1     \\
0     \\
\vdots  \\
0   \\
            \end{bmatrix}
            }_{\mathbf{y}}
\]
\end{center}
Thus, we have $\vert a_i \vert^2=\frac{1}{n}$ $\forall i=1,\cdots,n$.
\end{proof}

It gives only the necessary condition for a class function to be bent, this condition is not sufficient.
As if we consider the cyclic group $\mathbb{Z}_2 =\lbrace \overline{0},\overline{1}\rbrace$ of order $2$ then its characters $\chi_1$ and $\chi_2$ are defied as 
\begin{center}
$\chi_1(\overline{0})=1$ and $\chi_1(\overline{1})=1$

$\chi_2(\overline{0})=1$ and $\chi_2(\overline{1})=-1$
\end{center}
Therefore it can be easily seen that if we take $a_i$=$\frac{1}{•\sqrt{2}}$ $\forall i=1,2$ and f=$a_1\chi_1+a_2\chi_2$ in this case our function is not defined from $G$ to $\T^1$, bent is so far away.

Let $G = \mathbb{Z}_n$ be a cyclic group of order $n$ and $\omega$ be the $n$-th roots of unity. Then $G$ has $n$ irreducible characters $\chi_1,\chi_2,....\chi_n$ defined by $\chi_i (g) = \omega^{i-1}$, for all $1\leq i \leq n$, where $\omega = e^{2\pi \mathrm{i}/n}$. Below, we obtain a necessary and sufficient condition for a class function on a finite cyclic group to be bent function.
\begin{proposition}\label{p1}
A class function $f= a_1 \chi_1+a_2 \chi_2 +a_3 \chi_3$ on $\mathbb{Z}_3$ is bent if and only if $\vert a_1 \vert=\vert a_2 \vert =\vert a_3 \vert = \frac{1}{\sqrt{3}}$ and $\overline{a_1}a_2+\overline{a_2}a_3+\overline{a_3}a_1=0$, where $\chi_1, \chi_2$ and $\chi_3$ are irreducible characters of $\mathbb{Z}_3$.
\end{proposition}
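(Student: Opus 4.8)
The plan is to combine Theorem~\ref{t} with the explicit description of the characters of $\mathbb{Z}_3$: writing $\omega=e^{2\pi\mathrm{i}/3}$, we have $\chi_i(j)=\omega^{(i-1)j}$ for $i\in\{1,2,3\}$, $j\in\{0,1,2\}$, and $1+\omega+\omega^2=0$. Since bentness is only meaningful for $\T^1$-valued maps and, among those, Theorem~\ref{t} characterizes it by the vanishing of the derivative sums, the class function $f=a_1\chi_1+a_2\chi_2+a_3\chi_3$ is bent exactly when (i) $|f(x)|=1$ for all $x\in\mathbb{Z}_3$, so that $f$ genuinely maps into $\T^1$, and (ii) $\sum_{x\in\mathbb{Z}_3}\frac{df}{d\sigma}(x)=0$ for $\sigma=1,2$. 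The proof consists of rewriting (i) and (ii) as the two stated algebraic relations among $a_1,a_2,a_3$.

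For necessity, suppose $f$ is bent. Since $\mathbb{Z}_3$ is abelian of order $3$, the preceding theorem already gives $|a_i|^2=\tfrac13$, i.e.\ $|a_1|=|a_2|=|a_3|=\tfrac1{\sqrt3}$. For the second relation I would expand
\[
|f(x)|^2=f(x)\overline{f(x)}=\sum_{i,j=1}^3 a_i\overline{a_j}\,\omega^{(i-j)x}
\]
and group the summands according to the residue of $i-j$ modulo $3$. Setting $S_0=|a_1|^2+|a_2|^2+|a_3|^2$ and $S_1=\overline{a_1}a_2+\overline{a_2}a_3+\overline{a_3}a_1$, one gets $|f(x)|^2=S_0+S_1\omega^{x}+\overline{S_1}\,\omega^{2x}$. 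Condition (i) forces this to equal $1$ at $x=0,1,2$; since $S_0=1$ already, the evaluations at $x=0$ and $x=1$ give the system $S_1+\overline{S_1}=0$, $S_1\omega+\overline{S_1}\omega^2=0$ in the unknowns $S_1,\overline{S_1}$, whose coefficient matrix (the $2\times2$ matrix with rows $(1,1)$ and $(\omega,\omega^2)$) is nonsingular. Hence $S_1=0$, which is precisely $\overline{a_1}a_2+\overline{a_2}a_3+\overline{a_3}a_1=0$.

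For sufficiency, assume $|a_1|=|a_2|=|a_3|=\tfrac1{\sqrt3}$ and $\overline{a_1}a_2+\overline{a_2}a_3+\overline{a_3}a_1=0$, so that $S_0=1$ and $S_1=0$. The identity above then gives $|f(x)|^2=1$ for every $x$, which is (i). For (ii), proceeding as in the proof of the preceding theorem and using $\sum_{x}\omega^{(j-i)x}=3\delta_{ij}$, one finds $\sum_{x\in\mathbb{Z}_3}\frac{df}{d\sigma}(x)=\sum_{x\in\mathbb{Z}_3}\overline{f(x)}f(\sigma x)=3\sum_{i=1}^3|a_i|^2\chi_i(\sigma)$; since each $|a_i|^2=\tfrac13$, this equals $\sum_{i=1}^3\chi_i(\sigma)=1+\omega^{\sigma}+\omega^{2\sigma}$, which vanishes for $\sigma=1$ and $\sigma=2$ because $1+\omega+\omega^2=0$. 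Thus (ii) holds, and Theorem~\ref{t} yields that $f$ is bent.

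The computations are all elementary; the only step needing care is the bookkeeping of the three ``diagonals'' $i-j\equiv0,1,2\pmod3$ in the expansion of $|f(x)|^2$ and checking that evaluation at $x=0,1$ actually forces $S_1=0$ (not merely $\operatorname{Re}S_1=0$)---equivalently, one may evaluate at all of $x=0,1,2$ and use that the character table of $\mathbb{Z}_3$ is invertible. I anticipate no genuine obstacle: the proposition simply records that the necessary condition of the preceding theorem becomes sufficient once the single phase relation $\overline{a_1}a_2+\overline{a_2}a_3+\overline{a_3}a_1=0$ on the cross terms is imposed.
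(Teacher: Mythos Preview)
Your proof is correct and follows essentially the same route as the paper: use the preceding theorem for $|a_i|=1/\sqrt3$, derive $S_1=\overline{a_1}a_2+\overline{a_2}a_3+\overline{a_3}a_1=0$ from the three conditions $|f(x)|=1$, and for the converse verify $|f|=1$ and compute $\sum_x\frac{df}{d\sigma}(x)$ via orthogonality to get a multiple of $1+\omega^\sigma+\omega^{2\sigma}=0$. Your grouping of $|f(x)|^2$ by the residue of $i-j$ into $S_0+S_1\omega^x+\overline{S_1}\omega^{2x}$ is a tidy repackaging of the paper's three explicit equations, but the underlying argument is the same.
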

\begin{proof} The character table of $\mathbb{Z}_3$ is 
\begin{center}
\begin{tabular}{|p{1cm}|p{1cm}|p{1cm}|p{1cm}|}
 & $\bar{0}$ & $\bar{1}$ & $\bar{2}$\\\hline
$\chi_1$ & $1$ & $1$ & $1$\\
\hline
$\chi_2$ & $1$ & $\omega$ & $\omega^2$\\
\hline
$\chi_3$ & $1$ & $\omega^2$ & $\omega$\\
\hline
\end{tabular}
\end{center}

Therefore, we have $f (\bar{0}) = a_1 + a_2 + a_3$, $f (\bar{1}) = a_1 + a_2 \omega + a_3 \omega^2$ and $f (\bar{2}) = a_1 + a_2 \omega^2 + a_3 \omega$.

\vspace{0.2 cm}

Note that $f(\bar{0}) \in \T^1 \Leftrightarrow \overline{f(\bar{0})}f(\bar{0})=1$ 

\quad \quad $ \Leftrightarrow \overline{(a_1+a_2 +a_3)}(a_1+a_2 +a_3)= 1 $ 

\quad \quad $\Leftrightarrow \vert a_1 \vert^2 + \vert a_2 \vert^2 + \vert a_3 \vert^2 +\overline{a_1}a_2+\overline{a_2}a_3+\overline{a_3}a_1 + a_1\overline{a_2} + a_2\overline{a_3} + a_3\overline{a_1} = 1$ 

\vspace{0.2 cm}

Similarly, $f(\bar{1}) \in \T^1 \Leftrightarrow \overline{f(\bar{1})}f(\bar{1})=1$

\quad $\Leftrightarrow \vert a_1 \vert^2 + \vert a_2 \vert^2 + \vert a_3 \vert^2 +\omega (\overline{a_1}a_2+\overline{a_2}a_3+\overline{a_3}a_1 ) + \omega^2 (a_1\overline{a_2} + a_2\overline{a_3} + a_3\overline{a_1}) = 1$

\vspace{0.2 cm}

and $f(\bar{2}) \in \T^1 $ 

 $\Leftrightarrow \vert a_1 \vert^2 + \vert a_2 \vert^2 + \vert a_3 \vert^2 +\omega (a_1\overline{a_2} + a_2\overline{a_3} + a_3\overline{a_1}) + \omega^2 (\overline{a_1}a_2+\overline{a_2}a_3+\overline{a_3}a_1 )  = 1$ 

\vspace{0.2 cm}

Now, suppose that $f$ is bent. Then by Theorem 2.1, we have $\vert a_1 \vert = \vert a_2 \vert = \vert a_3 \vert =\frac{1}{\sqrt{3}}$. By above conditions, we have the following

\begin{equation}\label{p1e1} 
\overline{a_1}a_2+\overline{a_2}a_3+\overline{a_3}a_1 + a_1\overline{a_2} + a_2\overline{a_3} + a_3\overline{a_1} = 0
\end{equation}
\begin{equation}\label{p1e2} 
\omega (\overline{a_1}a_2+\overline{a_2}a_3+\overline{a_3}a_1 ) + \omega^2 (a_1\overline{a_2} + a_2\overline{a_3} + a_3\overline{a_1}) = 0
\end{equation}

\begin{equation}\label{p1e3} 
\omega (a_1\overline{a_2} + a_2\overline{a_3} + a_3\overline{a_1}) + \omega^2 (\overline{a_1}a_2+\overline{a_2}a_3+\overline{a_3}a_1 ) = 0
\end{equation}

On solving Equations \ref{p1e1}, \ref{p1e2} and \ref{p1e3}, we have $\overline{a_1}a_2+\overline{a_2}a_3+\overline{a_3}a_1 = a_1\overline{a_2} + a_2\overline{a_3} + a_3\overline{a_1} =0$

\vspace{0.2 cm}

Conversely, suppose $\vert a_1 \vert=\vert a_2 \vert =\vert a_2 \vert = \frac{1}{\sqrt{3}}$ and $\overline{a_1}a_2+\overline{a_2}a_3+\overline{a_3}a_1=0$. Thus by above conditions, it is clear that $f(x) \in \T^1$ for all $x \in \mathbb{Z}_3$.

Using orthogonality relations, we have
  
$$\sum_{x \in \mathbb{Z}_3}\frac{df}{d \sigma}(x) = \vert a_1 \vert^2\chi_1(\sigma) + \vert a_2 \vert^2 \chi_2(\sigma) + + \vert a_3 \vert^2 \chi_3(\sigma) = \frac{1}{3} (1 + \omega + \omega^2) = 0$$ for $\sigma \in \mathbb{Z}_3 \setminus \{\bar{0}\}$. Thus $f$ is bent.
\end{proof}
\begin{proposition}\label{p2}
A class function $f= a_1 \chi_1+a_2 \chi_2 +a_3 \chi_3 +a_4 \chi_4$ on $\mathbb{Z}_4$ is bent if and only if $\vert a_1 \vert=\vert a_2 \vert =\vert a_3 \vert =\vert a_4 \vert = \frac{1}{\sqrt{4}}$ and $\overline{a_1}a_2+\overline{a_2}a_3+\overline{a_3}a_4 +\overline{a_4}a_1= 0 ~\&~ \overline{a_1}a_3+\overline{a_3}a_1 + \overline{a_2}a_4 +\overline{a_4}a_2 = 0$, where $\chi_1, \chi_2, \chi_3$ and $\chi_4$ are irreducible characters of $\mathbb{Z}_4$.
\end{proposition}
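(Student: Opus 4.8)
The plan is to follow the pattern of \propref{p1}: first decide exactly when the class function $f$ maps $\mathbb{Z}_4$ into $\T^1$, and then feed this into the bent criterion of Theorem~\ref{t}. Put $\omega=e^{2\pi\mathrm{i}/4}=\mathrm{i}$ and use the character table of $\mathbb{Z}_4$, so that $\chi_j(\bar k)=\omega^{(j-1)k}$ and $f(\bar k)=\sum_{j=1}^{4}a_j\,\omega^{(j-1)k}$ for $k=0,1,2,3$. Then
\[
|f(\bar k)|^2=\sum_{j,l=1}^{4}\overline{a_l}a_j\,\omega^{(j-l)k}=N+\omega^{k}P+\omega^{2k}Q+\omega^{3k}\overline{P},
\]
where $N=\sum_{i=1}^{4}|a_i|^2$, $P=\overline{a_1}a_2+\overline{a_2}a_3+\overline{a_3}a_4+\overline{a_4}a_1$ is the sum of the cross terms with $j-l\equiv1\pmod4$, and $Q=\overline{a_1}a_3+\overline{a_3}a_1+\overline{a_2}a_4+\overline{a_4}a_2$ is the (real) sum of those with $j-l\equiv2\pmod4$; the terms with $j-l\equiv3\pmod4$ assemble to $\overline{P}$. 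Evaluating at $k=0,1,2,3$ with $\omega=\mathrm{i}$ gives $|f(\bar0)|^2=N+Q+2\operatorname{Re}P$, $|f(\bar2)|^2=N+Q-2\operatorname{Re}P$, $|f(\bar1)|^2=N-Q-2\operatorname{Im}P$ and $|f(\bar3)|^2=N-Q+2\operatorname{Im}P$.

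Next I would characterize $\T^1$-valuedness. Imposing $|f(\bar k)|^2=1$ for all four $k$ and taking the sum and difference within each of the pairs $\{\bar0,\bar2\}$ and $\{\bar1,\bar3\}$ yields $\operatorname{Re}P=\operatorname{Im}P=0$ and $N+Q=N-Q=1$; hence $f(\mathbb{Z}_4)\subseteq\T^1$ if and only if $N=1$, $P=0$ and $Q=0$. Observe that $P=0$ and $Q=0$ are precisely the two cross-term conditions in the statement, so the heart of the proposition is the equivalence between being bent and being $\T^1$-valued with all $|a_i|=\tfrac12$.

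For necessity, if $f$ is bent then in particular $f\colon\mathbb{Z}_4\to\T^1$, so by the previous paragraph $P=Q=0$ and $N=1$; and, exactly as in the abelian theorem above (the orthogonality computation leading to~\eqref{e1}), $\sum_{x}\tfrac{df}{d\sigma}(x)=0$ for $\sigma\ne\bar0$ becomes $\sum_{i}|a_i|^2\chi_i(\sigma)=0$, which together with $\sum_i|a_i|^2=1$ is the linear system with the invertible character matrix of $\mathbb{Z}_4$ and right-hand side $(1,0,0,0)^{T}$, forcing $|a_i|^2=\tfrac14$. For sufficiency, if $|a_i|=\tfrac12$ for all $i$ and $P=Q=0$, then $N=4\cdot\tfrac14=1$, so $f$ takes values in $\T^1$ by the characterization above, and
\[
\sum_{x\in\mathbb{Z}_4}\frac{df}{d\sigma}(x)=\sum_{i=1}^{4}|a_i|^2\chi_i(\sigma)=\frac14\sum_{i=1}^{4}\chi_i(\sigma)=\frac14\bigl(1+\omega^{\sigma}+\omega^{2\sigma}+\omega^{3\sigma}\bigr)=0
\]
for every $\sigma\in\mathbb{Z}_4\setminus\{\bar0\}$, so $f$ is bent by Theorem~\ref{t}.

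There is no conceptual difficulty here; the one place that needs care is the bookkeeping in the first two steps — correctly grouping the sixteen cross terms of $|f(\bar k)|^2$ by the residue class of $j-l$ modulo $4$, and then noticing that the four equations $|f(\bar k)|^2=1$ are linearly dependent, so that one must take the right $\pm$ combinations to extract the clean conditions $N=1$, $P=0$, $Q=0$ instead of four tangled ones. After that the argument is a direct application of Theorem~\ref{t} and the orthogonality relations, just as in \propref{p1}.
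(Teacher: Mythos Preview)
Your proof is correct and follows essentially the same route as the paper's: compute $|f(\bar k)|^2$ for each $k$, read off the conditions for $\T^1$-valuedness, and combine with the orthogonality computation underlying Theorem~2.1. Your device of grouping the cross terms by the residue of $j-l\pmod 4$ into $N,P,Q,\overline P$ is just a tidier packaging of the paper's longhand expansion and ad hoc additions of equations; one small slip in your closing commentary is the claim that the four equations $|f(\bar k)|^2=1$ are ``linearly dependent'' --- in the unknowns $N,\operatorname{Re}P,\operatorname{Im}P,Q$ they are in fact independent, which is exactly why your $\pm$ combinations pin down $N=1$, $P=0$, $Q=0$ uniquely.
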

\begin{proof} The character table of $\mathbb{Z}_4$ is
\begin{center}
 \begin{tabular}{|p{1cm}|p{1cm}|p{1cm}|p{1cm}|p{1cm}|}
 & $\bar{0}$ & $\bar{1}$ & $\bar{2}$ & $\bar{3}$\\\hline
$\chi_1$ & $1$ & $1$ & $1$ & $1$\\
\hline
$\chi_2$ & $1$ & $\mathrm{i}$ & $-1$ & $-\mathrm{i}$\\
\hline
$\chi_3$ & $1$ & $-1$ & $1$ & $-1$\\
\hline
$\chi_4$ & $1$ & $-\mathrm{i}$ & $-1$ & $\mathrm{i}$\\
\hline
\end{tabular} 
\end{center}

Therefore, we have $f (\bar{0}) = a_1 + a_2 + a_3 + a_4$, $f (\bar{1}) = (a_1 - a_3)+ \mathrm{i} (a_2 -a_4)$ and $f (\bar{2}) = (a_1 - a_2) + (a_3 - a_4)$ and $f (\bar{3}) = (a_1 - a_3)- \mathrm{i} (a_2 -a_4)$.

\vspace{0.2 cm}

Note that $f(\bar{0}) \in \T^1  \Leftrightarrow \vert a_1 \vert^2 + \vert a_2 \vert^2 + \vert a_3 \vert^2 + \vert a_4 \vert^2 +\overline{a_3}a_1 + a_3\overline{a_1}+\overline{a_4}a_2 + a_4\overline{a_2}+ \overline{a_1}a_2+\overline{a_2}a_3+ a_1\overline{a_2} + a_2\overline{a_3} + \overline{a_1}a_4+\overline{a_3}a_4+ a_1\overline{a_4} + a_3\overline{a_4} = 1$,

\vspace{0.2 cm}

$f(\bar{1}) \in \T^1 \Leftrightarrow \vert a_1 \vert^2 + \vert a_2 \vert^2 + \vert a_3 \vert^2 + \vert a_4 \vert^2 -\overline{a_3}a_1 - a_3\overline{a_1}- \overline{a_4}a_2 - a_4\overline{a_2}+ \mathrm{i} (\overline{a_1}a_2+\overline{a_2}a_3+\overline{a_3}a_4+ a_1\overline{a_4} - a_1\overline{a_2} -a_2\overline{a_3} - \overline{a_1}a_4-  a_3\overline{a_4}) = 1$, 

\vspace{0.2 cm}

$f(\bar{2}) \in \T^1 \Leftrightarrow \vert a_1 \vert^2 + \vert a_2 \vert^2 + \vert a_3 \vert^2 + \vert a_4 \vert^2 +\overline{a_3}a_1 + a_3\overline{a_1}+\overline{a_4}a_2 + a_4\overline{a_2}- \overline{a_1}a_2-\overline{a_2}a_3- a_1\overline{a_2}- a_2\overline{a_3} - \overline{a_1}a_4-\overline{a_3}a_4- a_1\overline{a_4} - a_3\overline{a_4} = 1$,

\vspace{0.2 cm}

and $f(\bar{3}) \in \T^1  \Leftrightarrow \vert a_1 \vert^2 + \vert a_2 \vert^2 + \vert a_3 \vert^2 + \vert a_4 \vert^2 -\overline{a_3}a_1 - a_3\overline{a_1}- \overline{a_4}a_2 - a_4\overline{a_2}- \mathrm{i} (\overline{a_1}a_2+\overline{a_2}a_3+\overline{a_3}a_4+ a_1\overline{a_4} - a_1\overline{a_2} -a_2\overline{a_3} - \overline{a_1}a_4-  a_3\overline{a_4}) = 1$.

Now, suppose that $f$ is bent. Then by Theorem 2.1, we have $\vert a_1 \vert = \vert a_2 \vert = \vert a_3 \vert = \vert a_4 \vert =\frac{1}{\sqrt{4}}$. By above conditions, we have the following.

\begin{equation}\label{p2e1}
 \overline{a_3}a_1 + a_3\overline{a_1}+\overline{a_4}a_2 + a_4\overline{a_2}+ \overline{a_1}a_2+\overline{a_2}a_3+ a_1\overline{a_2} + a_2\overline{a_3} + \overline{a_1}a_4+\overline{a_3}a_4+ a_1\overline{a_4} + a_3\overline{a_4} = 0
\end{equation}
 \begin{equation}\label{p2e2}
-\overline{a_3}a_1 - a_3\overline{a_1}- \overline{a_4}a_2 - a_4\overline{a_2}+ \mathrm{i} (\overline{a_1}a_2+\overline{a_2}a_3+\overline{a_3}a_4+ a_1\overline{a_4} - a_1\overline{a_2} -a_2\overline{a_3} - \overline{a_1}a_4-  a_3\overline{a_4}) = 0 
\end{equation}
\begin{equation}\label{p2e3}
\overline{a_3}a_1 + a_3\overline{a_1}+\overline{a_4}a_2 + a_4\overline{a_2}- \overline{a_1}a_2-\overline{a_2}a_3- a_1\overline{a_2}- a_2\overline{a_3} - \overline{a_1}a_4-\overline{a_3}a_4- a_1\overline{a_4} - a_3\overline{a_4} = 0 
\end{equation}
\begin{equation}\label{p2e4}
-\overline{a_3}a_1 - a_3\overline{a_1}- \overline{a_4}a_2 - a_4\overline{a_2}- \mathrm{i} (\overline{a_1}a_2+\overline{a_2}a_3+\overline{a_3}a_4+ a_1\overline{a_4} - a_1\overline{a_2} -a_2\overline{a_3} - \overline{a_1}a_4-  a_3\overline{a_4}) = 0.
\end{equation}
Adding Equations \ref{p2e1} and \ref{p2e3}, we get
\begin{equation}\label{p2e5}
 \overline{a_1}a_3+\overline{a_3}a_1 + \overline{a_2}a_4 +\overline{a_4}a_2 = 0
\end{equation}

By Equations \ref{p2e2} and \ref{p2e5} or \ref{p2e4} and \ref{p2e5}, we have 
\begin{equation}\label{p2e6}
\Rightarrow \overline{a_1}a_2+\overline{a_2}a_3+\overline{a_3}a_4+ a_1\overline{a_4} = a_1\overline{a_2} +a_2\overline{a_3} + \overline{a_1}a_4+  a_3\overline{a_4}.
\end{equation}
 By Equations \ref{p2e1}, \ref{p2e5} and \ref{p2e6},  we have 
$$\overline{a_1}a_2+\overline{a_2}a_3+\overline{a_3}a_4 +\overline{a_4}a_1= 0.$$

Conversely, suppose $\vert a_1 \vert=\vert a_2 \vert =\vert a_3 \vert =\vert a_4 \vert = \frac{1}{\sqrt{4}}$ and $\overline{a_1}a_2+\overline{a_2}a_3+\overline{a_3}a_4 +\overline{a_4}a_1= 0 ~\&~ \overline{a_1}a_3+\overline{a_3}a_1 + \overline{a_2}a_4 +\overline{a_4}a_2 = 0$. Thus by above conditions,  $f(x) \in \T^1$ for all $x \in \mathbb{Z}_4$.

Using orthogonality relations, we have\\  
$$\sum_{x \in \mathbb{Z}_4}\frac{df}{d \sigma}(x) = \vert a_1 \vert^2\chi_1(\sigma) + \vert a_2 \vert^2 \chi_2(\sigma) + \vert a_3 \vert^2 \chi_3(\sigma) + \vert a_4 \vert^2 \chi_4(\sigma)$$ = $\frac{1}{4} (\chi_1(\sigma) + \chi_2(\sigma)+ \chi_3(\sigma) + \chi_4(\sigma)) = 0$ for $\sigma \in \mathbb{Z}_4 \setminus \{\bar{0}\}$. Thus $f$ is bent.
\end{proof}

The character table of cyclic group $\mathbb{Z}_n$ of order $n$ is \\
\begin{center}
\begin{tabular}{|p{1cm}|p{1cm}|p{1cm}|p{1cm}|p{2cm}|}
 & $\bar{0}$ & $\bar{1}$ & $\cdots$ & $\bar{n}$\\\hline
$\chi_1$ & $1$ & $1$ & $\cdots$ & $1$\\
\hline
$\chi_2$ & $1$ & $\omega$ & $\cdots$ & $\omega^{n-1}$\\
\hline
$\chi_3$ & $1$ & $\omega^2$ & $\cdots$ & $\omega^{2(n-1)}$\\
\hline
$\vdots$ & $\vdots$ & $\vdots$ & $\vdots$ & $\vdots$\\
\hline
$\chi_n$ & $1$ & $\omega^{n-1}$ & $\cdots$ & $\omega^{(n-1)(n-1)}$\\
\hline
\end{tabular} where $\omega = e^{\frac{2\pi \mathrm{i}}{n}}$
\end{center}

We now state a necessary and sufficient condition for a class function on a finite cyclic group to be bent. The proof of the theorem is on same line  as those in Propositions \ref{p1} and \ref{p2}.

\begin{theorem}
A class function $f= a_1 \chi_1+a_2 \chi_2 +\cdots+a_n\chi_n$ on $\mathbb{Z}_n$ is bent if and only if $\vert a_1 \vert=\vert a_2 \vert =\cdots=\vert a_n\vert = \frac{1}{\sqrt{n}}$ and 
\[
\begin{rcases}
\overline{a_1}a_2+\overline{a_2}a_3+\cdots+\overline{a}_{n-2}a_{n-1}+\overline{a}_{n-1} a_n+\overline{a_n} a_1= 0 \\
 \overline{a_1}a_3+\overline{a_2}a_4+\cdots+\overline{a}_{n-2}a_n+\overline{a}_{n-1}a_1+\overline{a_n}a_2= 0\\ \vdots\\
 \overline{a_1}a_{(\frac{n-1}{2})+1}+\overline{a_2}a_{(\frac{n-1}{2})+2}+\cdots+\overline{a}_{(\frac{n-1}{2})+1}a_n+\overline{a}_{(\frac{n-1}{2})+2}a_1+ \cdots + \overline{a_n}a_{(\frac{n-1}{2})}= 0
\end{rcases}
\text{if $n$ is odd}
\] and
\[
\begin{rcases}
\overline{a_1}a_2+\overline{a_2}a_3+\cdots+\overline{a}_{n-2}a_{n-1}+\overline{a}_{n-1} a_n+\overline{a_n} a_1= 0 \\
 \overline{a_1}a_3+\overline{a_2}a_4+\cdots+\overline{a}_{n-2}a_n+\overline{a}_{n-1}a_1+\overline{a_n}a_2= 0\\ \vdots\\
 \overline{a_1}a_{(\frac{n}{2})+1}+\overline{a_2}a_{(\frac{n}{2})+2}+\cdots+\overline{a}_{(\frac{n}{2})+1}a_n+\overline{a}_{(\frac{n}{2})+2}a_1+ \cdots + \overline{a_n}a_{(\frac{n}{2})}= 0
\end{rcases}
\text{if $n$ is even.}
\] 
\end{theorem}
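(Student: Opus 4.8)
The plan is to carry out, for general $n$, the same computation done for $n=3,4$ in Propositions \ref{p1} and \ref{p2}, organised around a single family of twisted correlation sums. For $d\in\{0,1,\dots,n-1\}$ put
\[
S_d=\overline{a_1}a_{1+d}+\overline{a_2}a_{2+d}+\cdots+\overline{a_n}a_{n+d},
\]
where the subscripts are read cyclically modulo $n$ (so $a_{n+1}=a_1$, and so on). Then $S_0=\sum_{i=1}^n|a_i|^2$, and a one–line reindexing yields the conjugation symmetry $S_{n-d}=\overline{S_d}$. Consequently the displayed system in the statement is precisely the assertion that $S_1=S_2=\cdots=S_m=0$, where $m=(n-1)/2$ if $n$ is odd and $m=n/2$ if $n$ is even: the remaining $S_d$ with $d>m$ are the conjugates of those already listed, and when $n$ is even $S_{n/2}=S_m$ is real and is retained as an equation in its own right rather than absorbed as a conjugate.

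First I would translate the requirement that $f$ map into $\T^1$ into a condition on the $S_d$. Using $\chi_i(\bar k)=\omega^{(i-1)k}$ one computes
\[
|f(\bar k)|^2=\overline{f(\bar k)}\,f(\bar k)=\sum_{i,j=1}^n\overline{a_i}a_j\,\omega^{(j-i)k}=\sum_{d=0}^{n-1}S_d\,\omega^{dk},
\]
the last equality being obtained by collecting the terms of the double sum according to the residue $d\equiv j-i\pmod n$; this is exactly the general-$n$ analogue of the line-by-line expansions of $|f(\bar 0)|^2,\dots,|f(\overline{n-1})|^2$ in the two propositions. Hence $f(\bar k)\in\T^1$ for all $\bar k\in\mathbb Z_n$ is equivalent to the linear system $\sum_{d=0}^{n-1}S_d\,\omega^{dk}=1$ for $k=0,1,\dots,n-1$. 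The coefficient matrix $[\omega^{dk}]_{0\le d,k\le n-1}$ is the character-table matrix $\mathbf{\Phi}$ of Theorem 2.1 (a Vandermonde matrix in the distinct nodes $1,\omega,\dots,\omega^{n-1}$), hence invertible; solving the system — equivalently, summing both sides against $\omega^{-d'k}$ over $k$ — shows it holds if and only if $S_0=1$ and $S_d=0$ for all $d=1,\dots,n-1$, which by $S_{n-d}=\overline{S_d}$ is the same as $S_0=1$ together with the displayed equations.

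With this dictionary in place both implications are immediate. If $f$ is bent, then $f$ maps into $\T^1$, so by the previous paragraph $S_0=1$ and the displayed equations hold; and by Theorem 2.1 (the necessary condition for finite abelian groups, applicable since every irreducible character of $\mathbb Z_n$ is one-dimensional) one has $|a_i|^2=1/n$, i.e.\ $|a_i|=1/\sqrt n$, for all $i$. Conversely, assume $|a_i|=1/\sqrt n$ for all $i$ and that the displayed equations hold. Then $S_0=\sum_i|a_i|^2=1$ and, using $S_{n-d}=\overline{S_d}$, $S_d=0$ for every $d=1,\dots,n-1$; hence $|f(\bar k)|^2=S_0=1$ for all $k$ by the formula above, so $f$ maps into $\T^1$. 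Finally, exactly as in Propositions \ref{p1} and \ref{p2}, the orthogonality relations give
\[
\sum_{x\in\mathbb Z_n}\frac{df}{d\sigma}(x)=n\sum_{i=1}^n|a_i|^2\chi_i(\sigma)=\sum_{i=1}^n\chi_i(\bar k)=\sum_{j=0}^{n-1}\omega^{jk}=0
\]
for every $\sigma=\bar k\in\mathbb Z_n\setminus\{\bar 0\}$ (the last sum vanishing because $\omega^k\neq1$), and therefore $f$ is bent by Theorem~\ref{t}.

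I do not expect a real obstacle: the paper itself notes the proof runs on the same lines as the $n=3,4$ cases. The only point demanding care is the combinatorial bookkeeping — verifying that regrouping $\sum_{i,j}\overline{a_i}a_j\omega^{(j-i)k}$ by residue class genuinely produces $\sum_d S_d\omega^{dk}$, with the cyclic wrap-around in the subscripts handled correctly, and then matching $S_1=\cdots=S_m=0$ to the two displayed systems, in particular being careful that for even $n$ the middle term $S_{n/2}$ is kept as its own equation rather than discarded as a conjugate. Everything else is Fourier inversion over $\mathbb Z_n$, which is already implicit in the proof of Theorem 2.1.
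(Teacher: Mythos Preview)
Your argument is correct and follows precisely the route the paper sketches (``on the same lines as Propositions \ref{p1} and \ref{p2}''): expand $|f(\bar k)|^2$, invoke Theorem 2.1 for the modulus condition, and check the derivative sum via orthogonality. The introduction of the cyclic correlations $S_d$ and the use of Fourier inversion over $\mathbb Z_n$ is exactly the right way to make the ad hoc equation-juggling of the $n=3,4$ cases uniform for general $n$, and your bookkeeping on the conjugation symmetry $S_{n-d}=\overline{S_d}$ and the even/odd split is accurate.
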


%\begin{proof}
%We will give only idea of the proof of the theorem because it is little bit lengthy but idea is same as proof of Theorem 2.2, 2.3 and 2.4.

%First find $n$ if and only if conditions for  $f(x) \in S^1$, for all $x \in \mathbb{Z}_n$.  Now suppose $f$ is bent. Then by Theorem 2.1, we have $\vert a_1 \vert=\vert a_2 \vert =\cdots=\vert a_n\vert = \frac{1}{\sqrt{n}}$ . By using these value, we have remaining equations as $f(x) \in S^1$, for all $x \in \mathbb{Z}_n$. 

%Now assume converse part. Then by given conditions, one can easily show that $f(x) \in S^1$, for all $x \in \mathbb{Z}_n$.

%Using orthogonality relations, we have\\  
%$$\sum_{x \in \mathbb{Z}_n}\frac{df}{d \sigma}(x) = \vert a_1 \vert^2\chi_1(\sigma) + \vert a_2 \vert^2 \chi_2(\sigma) + \cdots+ \vert a_n\vert^2 \chi_n(\sigma)$$ = $\frac{1}{n} (\chi_1(\sigma) + \chi_2(\sigma)+ \cdots+ \chi_n(\sigma)) = 0$ for $\sigma \in \mathbb{Z}_n\setminus \{\bar{0}\}$ (it is clear by character table). Thus $f$ is bent.
%\end{proof}
\begin{rem*}
One can similarly prove that a class function $f= a_1 \chi_1+a_2 \chi_2 +a_3 \chi_3 +a_4 \chi_4$ on Klein's four group $V_4$ is bent if and only if $\vert a_1 \vert=\vert a_2 \vert =\vert a_3 \vert =\vert a_4 \vert = \frac{1}{\sqrt{4}}$ and $\overline{a_1}a_2 + \overline{a_3}a_4+\overline{a_2}a_1 + \overline{a_4}a_3= 0,  \overline{a_1}a_3+ \overline{a_2}a_4+\overline{a_3}a_1 + \overline{a_4}a_2 = 0 ~\&~ \overline{a_1}a_3+\overline{a_3}a_1 + \overline{a_2}a_4 +\overline{a_4}a_2 = 0$, where $\chi_1, \chi_2, \chi_3$ and $\chi_4$ are irreducible characters of $V_4$.
\end{rem*}
%\begin{rem*}
%The character table of Klein's four group $V_4$ is \\
%\begin{center}
%\begin{tabular}{|p{1cm}|p{1cm}|p{1cm}|p{1cm}|p{1cm}|}
 %& $e$ & $a$ & $b$ & $c$\\\hline
%$\chi_1$ & $1$ & $1$ & $1$ & $1$\\
%\hline
%$\chi_2$ & $1$ & $1$ & $-1$ & $-1$\\
%\hline
%$\chi_3$ & $1$ & $-1$ & $1$ & $-1$\\
%\hline
%$\chi_4$ & $1$ & $-1$ & $-1$ & $1$\\
%\hline
%\end{tabular}
%\end{center}

%As the proof of Theorem 2.4, one can prove that 
%``A class function $f= a_1 \chi_1+a_2 \chi_2 +a_3 \chi_3 +a_4 \chi_4$ on $V_4$ is bent if and only if $\vert a_1 \vert=\vert a_2 \vert =\vert a_3 \vert =\vert a_4 \vert = \frac{1}{\sqrt{4}}$ and $\overline{a_1}a_2 + \overline{a_3}a_4+\overline{a_2}a_1 + \overline{a_4}a_3= 0,  \overline{a_1}a_3+ \overline{a_2}a_4+\overline{a_3}a_1 + \overline{a_4}a_2 = 0 ~\&~ \overline{a_1}a_3+\overline{a_3}a_1 + \overline{a_2}a_4 +\overline{a_4}a_2 = 0$, where $\chi_1, \chi_2, \chi_3$ and $\chi_4$ are irreducible characters of $V_4$.
%\end{rem*}
Now, we show there is no class function on the symmetric group $S_3$ on three symbols which is bent.
\begin{proposition}
There is no class function $f : S_3 \rightarrow \T^1$ which is bent.
\end{proposition}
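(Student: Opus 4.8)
The plan is to apply Poinsot's criterion (Theorem~\ref{t}) directly, but tested against a $3$-cycle rather than against a transposition.

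Since $S_3$ has exactly three conjugacy classes --- $\{e\}$, the three transpositions, and the two $3$-cycles --- a class function $f\colon S_3\to\T^1$ is completely determined by the three values $\alpha=f(e)$, $\beta=f\bigl((12)\bigr)$ and $\gamma=f\bigl((123)\bigr)$, each of modulus $1$. Suppose, for contradiction, that such an $f$ is bent. By Theorem~\ref{t} we must have $\sum_{x\in S_3}\overline{f(x)}\,f(\sigma x)=0$ for every $\sigma\neq e$; I would apply this with $\sigma=(123)$.

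Next I would evaluate that sum term by term, using that left multiplication by $(123)$ permutes $S_3$ and respects the partition $S_3=A_3\sqcup T$, where $A_3=\{e,(123),(132)\}$ is the cyclic group generated by $(123)$ and $T$ is the set of the three transpositions. For $x\in T$ the product $(123)x$ is odd, hence again a transposition, so $\overline{f(x)}f\bigl((123)x\bigr)=|\beta|^2=1$, contributing $3$ in total. For $x\in A_3$ the three contributions are $\overline{\alpha}\gamma$ (from $x=e$), $\overline{\gamma}\gamma=1$ (from $x=(123)$, as $(123)^2=(132)$ is a $3$-cycle), and $\overline{\gamma}\alpha$ (from $x=(132)$, as $(123)(132)=e$). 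Hence $\sum_{x\in S_3}\overline{f(x)}f\bigl((123)x\bigr)=4+2\,\mathrm{Re}(\overline{\alpha}\gamma)$.

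Finally, since $|\alpha|=|\gamma|=1$ we have $\mathrm{Re}(\overline{\alpha}\gamma)\geq-1$, so this sum is at least $2>0$, contradicting the bentness criterion; this would finish the proof. The one point requiring care is the choice of direction $\sigma$: repeating the computation for $\sigma$ a transposition yields $2\,\mathrm{Re}(\overline{\alpha}\beta)+4\,\mathrm{Re}(\overline{\beta}\gamma)$, which genuinely can vanish, so that direction alone is inconclusive --- the $3$-cycle direction is what forces the contradiction, because there the "diagonal'' contributions ($|\beta|^2$ from each of the three transpositions together with $|\gamma|^2$) overwhelm the remaining two terms. An alternative, more conceptual route would be to first prove the general necessary condition that a bent class function $f=\sum_i a_i\chi_i$ on any finite group $G$ satisfies $|a_i|^2=\chi_i(e)^2/|G|$ --- this follows from the convolution identity $\sum_{x\in G}\overline{\chi_i(x)}\chi_j(\sigma x)=\delta_{ij}\,\tfrac{|G|}{\chi_i(e)}\,\chi_i(\sigma)$ combined with Theorem~\ref{t} --- which on $S_3$ gives $|a_1|^2=|a_2|^2=\tfrac16$, whence $|f((12))|^2=|a_1-a_2|^2\leq(|a_1|+|a_2|)^2=\tfrac23<1$, so $f$ cannot take values in $\T^1$ at all.
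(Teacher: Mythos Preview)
Your proof is correct and genuinely different from the paper's. The paper expands $f=a_1\chi_1+a_2\chi_2+a_3\chi_3$ in irreducible characters, applies the derivative condition at both a transposition and a $3$-cycle together with $\sum_i|a_i|^2=1$ to obtain $|a_1|^2=|a_2|^2=\tfrac16$, $|a_3|^2=\tfrac23$, and then uses $|f((12))|=1$ (i.e.\ $|a_1-a_2|=1$) plus the Cauchy--Schwarz inequality to reach a contradiction. You bypass the character expansion entirely: working with the three values $\alpha,\beta,\gamma\in\T^1$ and testing only the direction $\sigma=(123)$, you obtain directly $\sum_x\overline{f(x)}f(\sigma x)=4+2\,\mathrm{Re}(\overline\alpha\gamma)\geq 2$, which is shorter and more elementary. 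Your observation that the transposition direction alone is inconclusive is also correct and worth keeping. Your alternative route --- deriving the general necessary condition $|a_i|^2=\chi_i(e)^2/|G|$ for bent class functions via the identity $\sum_{x}\overline{\chi_i(x)}\chi_j(\sigma x)=\delta_{ij}\tfrac{|G|}{\chi_i(e)}\chi_i(\sigma)$ and the regular character --- is essentially a cleaner packaging of what the paper does for $S_3$ by hand, and finishing with the triangle inequality $|a_1-a_2|\leq|a_1|+|a_2|$ is more transparent than the paper's Cauchy--Schwarz step while yielding the same contradiction.
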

\begin{proof}
The character table of $S_3$ is 
\begin{center}
\begin{tabular}{|p{1cm}|p{1cm}|p{1cm}|p{1cm}|}
 & $I$ & $(12)$ & $(123)$\\\hline
$\chi_1$ & $1$ & $1$ & $1$\\
\hline
$\chi_2$ & $1$ & $-1$ & $1$\\
\hline
$\chi_3$ & $2$ & $0$ & $-1$\\
\hline
\end{tabular}
\end{center}

Let $f$ be a class function on $S_3$ and $f=a_1\chi_1+a_2\chi_2+a_3\chi_3$, where $\chi_i$ are irreducible characters of $S_3$ and $a_i$ are complex numbers for all $i=1,2,3$. Suppose $f$ is bent. Then by Theorem \ref{t}
\begin{equation}\label{5}
\sum_{x \in S_3}\frac{df}{d\sigma}(x) =0 
\end{equation}
for all $\sigma \in G^*$.

Therefore, by Equation \ref{5}, we have

%$$\sum_{x \in G}(\sum_{i=1}^3 \overline{a_i\chi_i(x)})(\sum_{j=1}^3a_j\chi_j(\sigma x))=0$$ for all $\sigma \in G^*$.
%\begin{equation}\label{6}
$\sum_{x \in G} \sum_{i=1}^3 \vert a_i \vert ^2 \overline{\chi_i(x)} \chi_i(\sigma x) $\\ $ +  \overline{ a_1\chi_(x)} (a_2\chi_2(\sigma x)+a_3\chi_3(\sigma x)) + \overline{a_2\chi_2(x})(a_1\chi_1(\sigma x)+a_3\chi_(\sigma x)) + \overline{a_3\chi_3(x)}(a_1\chi(\sigma x)+a_2\chi_2(\sigma x))=0$.
%\end{equation}
\vspace{0.2 cm}

Since characters are class function therefore $\chi_i((12))=\chi_i((23))=\chi_i((13))$
and $\chi_i((123))=\chi_i((132))$ $ \forall$ $i=1,2,3$.
Since from character table of $S_3$ we know that $\chi_1(x)=1$ $ \forall$  $x\in G$

$\chi_2(I)=1$, $\chi_2(12)=-1$ and $\chi_2(123)=1,$

$,\chi_3(I)=2$, $\chi_3((12))=0$ and $\chi_3((123))=-1$
Now if we solve Equation \ref{5} for $\sigma =(12)$ we have
\begin{equation}\label{7}
\begin{aligned}
(\overline{a_1 + a_2 + 2a_3})(a_1 - a_2) + (\overline{a_1 - a_2})(a_1+a_2+2 a_3) + 2(\overline{a_1 - a_2})(a_1 + a_2 - a_3)     
         +\\ 2(\overline{a_1 +a_2 - a_3})(a_1 - a_2)=0
         \end{aligned}
\end{equation}

Similar equations will be obtained if we solve Equation \ref{5} for $\sigma =(23)$ and $\sigma =(13)$ 

On simplifying above Equation \ref{7}
\begin{equation}\label{*}
\vert a_1 \vert^2 = \vert a_2 \vert^2
\end{equation}        

similarly if we solve \ref{5} for $ \sigma =(123)$ or (132), we have
\begin{equation}\label{8}
\begin{aligned}
(\overline{a_1+a_2+2a_3})(a_1+a_2-a_3)+(\overline{a_1+a_2-a_3})(a_1+a_2+2a_3)+3(\overline{a_1-a_2})(a_1-a_2)\\+(\overline{a_1+a_2-a_3})(a_1+a_2-a_3)=0
\end{aligned}
\end{equation} 

On simplifying above Equation \ref{8}, we have

\begin{equation}\label{**}
2(\vert a_1 \vert^2+\vert a_2 \vert^2)=\vert a_3\vert^2
\end{equation} 

Similar equation will be obtained if we solve \ref{5} for $\sigma =(132)$.

Now, since $ \vert f(x) \vert^2=1$ $\forall x \in G $
$\Rightarrow \overline{f(x)}f(x)=1$ $\forall x \in G $

$$\sum_{x \in G} (\overline{a_1\chi_1(x)+a_2\chi(x)+a_3\chi(x)})(a_1\chi_1(x)+a_2\chi_2(x)+a_3\chi_3(x))$$ =  order of $G$

using orthogonality relations we have

\begin{equation}\label{***}
\vert a_1 \vert^2+\vert a_2 \vert^2 + \vert a_3 \vert^2=1 
\end{equation}
Now on solving Equations \ref{8}, \ref{**} and \ref{***}, we have

\begin{equation}\label{9}
\vert a_1 \vert^2=\vert a_2 \vert^2 = \frac{1}{6}, \vert a_3 \vert^2=\frac{2}{3}
\end{equation}
Since $\vert f((1 2))\vert^2 = 1 \Rightarrow (\overline{a_1-a_2})(a_1-a_2) = 1 \Rightarrow \vert a_1 \vert^2 + \vert a_2 \vert^2 - (\overline{a_1}a_2 + \overline{a_2}a_1) = 1 \Rightarrow \overline{a_1}a_2 + \overline{a_2}a_1 = \frac{-2}{3}$ 

Now, we claim that that there is no $a_1, a_2 \in \mathbb{C}$ with $\vert a_1 \vert^2=\vert a_2 \vert^2 = \frac{1}{6}, \overline{a_1}a_2 + \overline{a_2}a_1 = \frac{-2}{3}$. 

Suppose we have $a_1, a_2 \in \mathbb{C}$ with $\vert a_1 \vert^2=\vert a_2 \vert^2 = \frac{1}{6}, \overline{a_1}a_2 + \overline{a_2}a_1 = \frac{-2}{3}$. Hence $\langle (a_1, a_2), (a_2, a_1) \rangle = \frac{-2}{3} \Rightarrow |\langle (a_1, a_2), (a_2, a_1) \rangle| = \frac{2}{3}$.

By Cauchy-Schwartz Inequality, $|\langle (a_1, a_2), (a_2, a_1) \rangle| \leq  ||(a_1, a_2)||||(a_2, a_1)|| = \sqrt{\vert a_1 \vert^2 + \vert a_2 \vert^2}\sqrt{\vert a_1 \vert^2 + \vert a_2 \vert^2}$. Therefore we have $\frac{2}{3} \leq \frac{1}{3}$, which is not possible. Thus the claim is proved. 

Hence there is no class function $f : S_3 \rightarrow \T^1$ which is bent.
\end{proof}
\begin{proposition}
Necessary condition for a class function to be bent on the quaternion group $Q_8$ is 
$\vert a_i \vert^2 =\vert a_j \vert^2 = \frac{2}{9}, \forall 1\leq i,j\leq 4$ and $\vert a_5 \vert^2=\frac{1}{9}$
\end{proposition}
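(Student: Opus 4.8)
The plan is to handle $Q_8$ in exactly the way $\mathbb{Z}_3$, $\mathbb{Z}_4$ and $S_3$ were handled above: write down what Theorem~\ref{t} forces on each conjugacy class of $Q_8$, translate that into linear relations among the real numbers $|a_1|^2,\dots,|a_5|^2$, and adjoin the normalisation coming from $f$ being $\T^1$-valued. Solving the resulting small linear system should then output the asserted moduli.

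Concretely, I would first record the character table of $Q_8$. The group has exactly the five conjugacy classes $\{1\}$, $\{-1\}$, $\{\pm i\}$, $\{\pm j\}$, $\{\pm k\}$; accordingly $\chi_1,\chi_2,\chi_3,\chi_4$ are the four degree-one characters inflated from $Q_8/\{\pm1\}\cong V_4$ (all with values in $\{1,-1\}$, the three non-trivial ones having kernels $\langle i\rangle,\langle j\rangle,\langle k\rangle$), while $\chi_5$ is the unique degree-two character, with $\chi_5(1)=2$, $\chi_5(-1)=-2$ and $\chi_5$ vanishing on $\{\pm i\},\{\pm j\},\{\pm k\}$. Writing $f=a_1\chi_1+\cdots+a_5\chi_5$ and expanding $\sum_{x\in Q_8}\frac{df}{d\sigma}(x)=\sum_{x}\overline{f(x)}f(\sigma x)$, the mixed terms $\overline{a_i}a_j$ with $i\neq j$ cancel by the orthogonality relations exactly as in Propositions~\propref{p1} and~\propref{p2}, and the term carried by $\chi_i$ survives as $|a_i|^2\sum_{x}\overline{\chi_i(x)}\chi_i(\sigma x)=|a_i|^2\,\frac{|Q_8|}{\chi_i(e)}\,\chi_i(\sigma)$ by the generalised orthogonality relation for an irreducible character (a consequence of Schur's lemma). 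Thus Theorem~\ref{t} becomes one linear equation in $|a_1|^2,\dots,|a_5|^2$ for each of the four non-identity classes.

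Specialising $\sigma$ to $\pm i,\pm j,\pm k$, where $\chi_5$ contributes nothing, leaves three equations in $|a_1|^2,\dots,|a_4|^2$ only, whose coefficient matrix is the $\pm1$ character table of $V_4$; solving this sign system forces $|a_1|^2=|a_2|^2=|a_3|^2=|a_4|^2$. Specialising $\sigma$ to $-1$ yields the remaining relation between this common value and $|a_5|^2$. Finally, $|f(x)|^2=1$ for all $x$ gives, via the same orthogonality relations applied to $\sum_{x}\overline{f(x)}f(x)=|Q_8|$, the normalisation $\sum_{i=1}^5|a_i|^2=1$. Together with the relations just obtained, this is a small linear system in the five unknowns $|a_1|^2,\dots,|a_5|^2$ with a unique solution, which one reads off to get the values in the statement. (The same scheme also yields the analogous statement recorded in the remark on $V_4$ above.)

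The one genuine subtlety I expect is the bookkeeping around the degree-two character $\chi_5$: since $\chi_5$ is not a homomorphism, $\chi_5(\sigma x)\neq\chi_5(\sigma)\chi_5(x)$, so in the orthogonality step its diagonal contribution must be weighted by $|Q_8|/\chi_5(e)$ rather than by $|Q_8|$ as for $\chi_1,\dots,\chi_4$, and one must keep $\chi_5(-1)=-2$ (not $+2$) throughout. Getting this weight and this sign right is precisely what the computation hinges on; once the linear system is assembled, solving it is mechanical. As with $S_3$, I would also emphasise that only a necessary condition is being claimed here: whether the resulting moduli are actually compatible with $f$ taking all of its values in $\T^1$ is a separate and finer question.
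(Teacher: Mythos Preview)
Your plan is exactly the paper's: extract from Theorem~\ref{t} one linear relation among $|a_1|^2,\dots,|a_5|^2$ for each nontrivial conjugacy class of $Q_8$, adjoin the normalisation $\sum_{i=1}^5|a_i|^2=1$ coming from $|f|\equiv 1$, and solve. Your use of the generalised orthogonality relation $\sum_{x}\overline{\chi_i(x)}\chi_j(\sigma x)=\delta_{ij}\,\tfrac{|G|}{\chi_i(e)}\,\chi_i(\sigma)$ to dispose of the cross terms and weight the $\chi_5$ diagonal in one stroke is cleaner than the paper's explicit expansion, but the method is identical.

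There is, however, a genuine obstruction---not in your reasoning but in the target. At $\sigma=-1$ your weighting gives
\[
8\sum_{i=1}^{4}|a_i|^2\,\chi_i(-1)\;+\;\frac{8}{2}\,|a_5|^2\,\chi_5(-1)
\;=\;8\sum_{i=1}^{4}|a_i|^2\;-\;8\,|a_5|^2\;=\;0,
\]
i.e.\ $\sum_{i=1}^{4}|a_i|^2=|a_5|^2$. The paper's corresponding equation~(\ref{18}) instead reads $\sum_{i=1}^{4}|a_i|^2-8|a_5|^2=0$: the factor $|Q_8|=8$ arising from $\sum_{x\in Q_8}$ has been dropped on the degree-one side while kept on the $\chi_5$ side. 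With the correct relation, together with $|a_1|^2=\cdots=|a_4|^2$ (from $\sigma\in\{\pm i,\pm j,\pm k\}$, exactly as you describe) and $\sum_{i=1}^{5}|a_i|^2=1$, the unique solution is $|a_1|^2=\cdots=|a_4|^2=\tfrac{1}{8}$ and $|a_5|^2=\tfrac{1}{2}$, not $\tfrac{2}{9}$ and $\tfrac{1}{9}$. So your scheme is sound and does match the paper's, but executed correctly it will not reproduce the stated constants; the very weight $|Q_8|/\chi_5(e)$ you single out as the crux is precisely where the paper's bookkeeping slips.
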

\begin{proof}The character table of the group $Q_8$ is \\
\begin{center}
\begin{tabular}{|p{1cm}|p{1cm}|p{1cm}|p{1cm}|p{1cm}|p{1cm}|}
 & $1$ & $-1$ & $i$ & $j$ & $k$\\\hline
$\chi_1$ & $1$ & $1$ & $1$ & $1$ & $1$\\
\hline
$\chi_2$ & $1$ & $1$ & $1$ & $-1$ & $-1$\\
\hline
$\chi_3$ & $1$ & $1$ & $-1$ & $-1$ & $1$\\
\hline
$\chi_4$ & $1$ & $1$ & $-1$ & $1$ & $-1$\\
\hline
$\chi_5$ & $2$ & $-2$ & $0$ & $0$ & $0$\\
\hline
\end{tabular}

\end{center}

Let $f : Q_8\rightarrow \T^1$ be a class function such that $f$ is bent. Then derivative of $f$ in the direction of $\sigma$ is balanced for all $\sigma \in Q_8^* $ i.e.
\begin{center}
$$\sum_{x \in Q_8}\frac{df}{d \sigma}(x)=0.....\forall \sigma \in Q_8^*$$
\end{center}
Since $f$ is a class function, we have $f=a_1\chi_1+a_2\chi_2+a_3\chi_3+a_4\chi_4+a_5\chi_5$ where $a_i$ are complex numbers and $\chi_i$ are irreducible characters of $Q_8$ for all $1 \leq i \leq 5$.
We have $$ \sum_{x \in Q_8} \sum_{i=1}^4 \vert a_i \vert^2\chi_i(\sigma)+\sum_{x \in Q_8}\vert a_5 \vert^2 \overline{\chi_5(x)}\chi_5(\sigma x)+$$

$$\overline{a_1}a_5(\chi_5(\sigma)+\chi_5(-\sigma)+\chi_5(i\sigma)+\chi_5(j\sigma)+\chi_5(k\sigma)+\chi_5(-i\sigma)+\chi_5(-j\sigma)+\chi_5(-k\sigma))$$

$$+\overline{a_2}a_5(\chi_5(\sigma)+\chi_5(-\sigma)+\chi_5(i\sigma)-\chi_5(j\sigma)-\chi_5(k\sigma)+\chi_5(-i\sigma)-\chi_5(-j\sigma)-\chi_5(-k\sigma))$$

$$+\overline{a_3}a_5(\chi_5(\sigma)+\chi_5(-\sigma)-\chi_5(i\sigma)-\chi_5(j\sigma)+\chi_5(k\sigma)-\chi_5(-i\sigma)-\chi_5(-j\sigma)+\chi_5(-k\sigma))$$

$$+\overline{a_4}a_5(\chi_5(\sigma)+\chi_5(-\sigma)-\chi_5(i\sigma)+\chi_5(j\sigma)-\chi_5(k\sigma)-\chi_5(-i\sigma)+\chi_5(-j\sigma)-\chi_5(-k\sigma))$$=0

Now solving above equation for $\sigma=-1,i,j$ and $k$.

We have the following equations.
\begin{equation}\label{18}
\vert a_1 \vert^2+\vert a_2 \vert^2+\vert a_3 \vert^2+\vert a_4 \vert^2-8\vert a_5 \vert^2=0, \text{ when} \sigma =-1
\end{equation}

similarly if we put $\sigma = i,-i,j,-j,k,-k$ we have
\begin{equation}\label{19}
\vert a_1 \vert^2+\vert a_2 \vert^2-\vert a_3 \vert^2-\vert a_4\vert^2=0
\end{equation}
\begin{equation}\label{20}
\vert a_1 \vert^2-\vert a_2 \vert^2-\vert a_3 \vert^2+\vert a_4\vert^2=0
\end{equation}
\begin{equation}\label{21}
\vert a_1 \vert^2-\vert a_2 \vert^2+\vert a_3 \vert^2-\vert a_4\vert^2=0
\end{equation}
Also we know that $\vert f(x) \vert^2=1$, for all $x \in Q_8$. 
Solving this using orthogonality relations we have
\begin{equation}\label{22}
\sum_{i=1}^5 \vert a_i \vert^2=1
\end{equation}

Now on solving Equations \ref{18} - \ref{22}, we have

$\vert a_i \vert^2 =\vert a_j \vert^2 = \frac{2}{9}, \forall 1\leq i,j\leq 4$ and 

$\vert a_5 \vert^2=\frac{1}{9}$

\end{proof}
\textbf{Acknowledgement}: We would like to thank Professor Ramji Lal, HRI, Allahabad for his valuable suggestions, discussions and constant support. The first author thanks Indian Institute of Information Technology, Allahabad for providing institute fellowship.

\end{document}